\newtheorem{theorem}{Theorem}
\newtheorem{proposition}[theorem]{Proposition}
\theoremstyle{remark}
\newtheorem{remark}[theorem]{Remark}
\numberwithin{equation}{section}
\begin{document}

\title[Cubature Rules for  Unitary Jacobi Ensembles]
{Cubature Rules for Unitary Jacobi Ensembles}

\author{J.F.  van Diejen}

\address{
Instituto de Matem\'atica y F\'{\i}sica, Universidad de Talca,
Casilla 747, Talca, Chile}

\email{diejen@inst-mat.utalca.cl}

\author{E. Emsiz}

\address{
Delft Institute of Applied Mathematics,
Delft University of Technology,
Van Mourik Broekmanweg 6, 2628 XE, Delft, The Netherlands}
\email{e.emsiz@tudelft.nl}

\subjclass[2010]{Primary: 65D32;  Secondary 15B52, 28C10, 43A75}
\keywords{cubature rules, random matrices, compact Lie groups, Haar measures}

\thanks{This work was supported in part by the {\em Fondo Nacional de Desarrollo
Cient\'{\i}fico y Tecnol\'ogico (FONDECYT)} Grant   \# 1170179.}

\date{June 2020}

\begin{abstract}
We present Chebyshev type cubature rules for the exact integration of rational symmetric functions with poles on prescribed coordinate hyperplanes. Here the integration is with respect to the densities of unitary Jacobi ensembles
stemming from the Haar measures of the orthogonal and the compact symplectic Lie groups. 
\end{abstract}

\maketitle



\section{Introduction}\label{sec1}
It has been long known that orthogonal polynomials play a pivotal role in the construction of quadrature rules for the efficient numerical integration of functions over the interval
\cite{sze:orthogonal}. Part of the underlying theory has been generalized to higher dimensions, resulting in various  analogous types of cubature rules for the efficient numerical integration of functions in several variables
\cite{coo:constructing,coo-mys-sch:cubature,dun-xu:orthogonal,sob:cubature,sob-vas:theory,str:approximate}. In this note we are mainly
 concerned with a particular class of cubature rules that arises from the theory of \emph{symmetric} orthogonal polynomials
\cite{ber-sch-xu:multivariate,bru-hri-mot:discrete,hri-mot:discrete,hri-mot-pat:cubature,li-xu:discrete,moo-mot-pat:gaussian,moo-pat:cubature}. 
 These rules are designed to integrate symmetric functions over a hypercube.
The  cubatures in question provide a multivariate analog of  well-known Chebyshev type quadratures for the integration rational functions with prescribed poles outside the interval of integration \cite{bul-cru-dec-gon:rational,dar-gon-jim:quadrature,dec-van-bul:rational,die-ems:quadrature,van-bul-gon:computing}. 

Specifically, we will derive a cubature rule for integrals of rational symmetric functions in  $\cos (\xi_1),\ldots ,\cos (\xi_n)$ that admit poles
on prescribed coordinate hyperplanes outside the integration domain. 
The integration measures stem from the Haar measures of the orthogonal and the compact symplectic Lie groups (cf. e.g.
 \cite[Chapter 11.10]{pro:lie} and  \cite[Chapter IX.9]{sim:representations}).  They are given explicitly by the
unnormalized densities of  unitary Jacobi ensembles  (cf. e.g.  \cite[Sections 2.6, 3.7]{for:log-gases} and  \cite[Chapter 19]{meh:random}):
  \begin{align}\label{rho-jacobi}
\rho_\epsilon  (\boldsymbol{\xi}):=\prod_{1\leq j\leq n} 2^{\epsilon_++\epsilon_-} & \bigl(1+\epsilon_+\cos(\xi_j)\bigr)   \bigl(1-\epsilon_-\cos(\xi_j)\bigr) \\
&\times \prod_{1\leq j<k\leq n}  \bigl(\cos (\xi_j)-\cos(\xi_k)\bigr)^2 ,\nonumber
\end{align}
where $\boldsymbol{\xi}=(\xi_1,\ldots ,\xi_n)\in [ 0,\pi ]^n$ and $\epsilon_+ ,\epsilon_-\in \{ 0,1\}$. Here the different choices of  $\epsilon_+ ,\epsilon_-\in \{ 0,1\}$ correspond to
selecting the Haar measure on the orthogonal group in odd dimensions
$O(2n+1;\mathbb{R})$
(type $B_n$: $\epsilon_+ \neq \epsilon_-$), on the compact symplectic group $Sp (n;\mathbb{H})$ (type $C_n$: $\epsilon_\pm =1$), or on the orthogonal group in even dimensions $O(2n;\mathbb{R})$ (type $D_n$: $\epsilon_\pm =0$), respectively. The measures in question should be viewed as the natural analogs of the celebrated Chebyshev orthogonality measures in the context of the integration of symmetric functions.

To achieve the above-stated goal we present  in Section \ref{sec2} a simple formalism  that permits to convert quadrature rules into cubature formulas for symmetric functions.  It is based on the Cauchy-Binet formula in combination with its integral variant due to Andr\'eief \cite{and:note,for:meet}. For completeness, both formulas  are recalled in Appendix \ref{app} at the end of this note.  In the case of (quasi-)Gaussian
quadratures, the formalism of Section \ref{sec2} recovers a fundamental cubature rule originally found by Berens, Schmid and Xu \cite{ber-sch-xu:multivariate}. A key feature of the present approach based on the Cauchy-Binet-Andr\'eief formulas is that it is very straightforward to process input from a wide class of starting quadratures, including e.g.  those of Radau--  and Lobatto type.
Next we focus, in Section \ref{sec3}, on  quadratures stemming from the Bernstein-Szeg\"o polynomials  \cite{die-ems:quadrature}. These quadrature rules are designed to integrate rational functions with prescribed poles against the Chebyshev  weight functions. When lifting these quadratures following the procedure of Section \ref{sec2}, the desired cubature rules are found for the integration of
symmetric rational functions with prescribed poles on coordinate hyperplanes against the densities of the unitary Jacobi ensembles.
In the special case of  the Gauss quadrature associated with the  Bernstein-Szeg\"o polynomials \cite{bul-cru-dec-gon:rational,dar-gon-jim:quadrature,dec-van-bul:rational,van-bul-gon:computing},  we recover a corresponding specialization of the
Gaussian cubature from \cite{ber-sch-xu:multivariate} exhibited in \cite{die-ems:exact}.
Finally, when poles are absent our cubature formulas reduce  to more elementary cubature rules  for the integration of symmetric polynomials. The latter rules arose
previously in \cite{bru-hri-mot:discrete} and  \cite{hri-mot:discrete} via their relations with the discrete sine and cosine transforms.

\section{Cubature of symmetric polynomials by lifting  quadrature rules}\label{sec2}
Let $\text{w}(x)$, $a<x<b$ be a  weight function with finite moments $\int_a^b x^k \text{w}(x) \text{d} x$ ($k=0,1,2,\ldots$).
For our purposes it is enough to assume that $a,b\in\mathbb{R}$, but Proposition \ref{L-quadrature:prp} and its proof below actually remain
valid when considering $a=-\infty$ and/or $b=+\infty$, provided all the moment integrals converge in absolute value. For $m$ nonnegative integral, we denote by
\begin{subequations}
\begin{equation}\label{quadrature}
\int_a^b  f(x) \text{w}(x) \text{d} x  =   \sum_{0\leq l\leq m}    f( x^{(m+1)}_l)  \text{w}^{(m+1)}_{l}
\end{equation}
a quadrature rule with Christoffel weights $\text{w}^{(m+1)}_0,\ldots , \text{w}^{(m+1)}_m$ supported on $m+1$ nodes that  lie on  the real axis  within the domain of integration:
\begin{equation}\label{nodes}
a\leq x_0^{(m+1)}< x_1^{(m+1)}<\cdots <x_m^{(m+1)}\leq b.
\end{equation}
\end{subequations}
Throughout it will be assumed that the quadrature rule under consideration is exact for all polynomials $f(x)$ in $x$ of degree at most
\begin{equation*}
\texttt{D}=2m+1-\delta 
\end{equation*}
for some fixed ($m$-independent, integer-valued) constant $\delta\geq 0$. The constant in question measures the distance of the degree of exactness $\texttt{D}$ from
the optimal Gaussian value $2m+1$.  In other words, our rule takes effect as soon as
the number of nodes is sufficiently large so as to guarantee that
 $2m+1\geq \delta$ (so $\texttt{D}\ge 0$).

Let $\mathbb{P}^{(\texttt{D},n)}$ denote the $\binom{\texttt{D}+n}{n}$-dimensional space of symmetric polynomials in $\mathbf{x}:=(x_1,\ldots ,x_n)$ that are of degree at most $\texttt{D}$ in each of the variables $x_j$ ($j=1,\ldots ,n$). The following proposition lifts the quadrature rule in Eqs. \eqref{quadrature}, \eqref{nodes} to an exact cubature rule in  $\mathbb{P}^{(\texttt{D},n)}$.

\begin{proposition}[Exact Cubature Rule in $\mathbb{P}^{(\texttt{D},n)}$]\label{L-quadrature:prp}
For $f(\mathbf{x})\in\mathbb{P}^{(\emph{\texttt{D}},n)}$, one has that
\begin{subequations}
\begin{equation}\label{L-quadrature:a}
\frac{1}{n!} \int_a^{b}\cdots\int_a^b 
f (\mathbf{x})  \emph{W}^{(n)}(\mathbf{x}) \emph{d} x_1\cdots\emph{d} x_n 
 =\sum_{{\lambda}\in \Lambda^{(m,n)}}
f \bigl(\mathbf{x}^{(m,n)}_{{\lambda}}\bigr)   \emph{W}^{(m,n)}_{{\lambda}} ,
\end{equation}
where
\begin{equation}\label{L-quadrature:b}
 \emph{W}^{(n)}(\mathbf{x}):= \prod_{1\leq j<k\leq n} (x_j-x_k)^2      \prod_{1\leq j\leq n} \emph{w}(x_j) ,
\end{equation}
\begin{equation}\label{L-quadrature:c}
\mathbf{x}^{(m,n)}_{{\lambda}}:= \left( x^{(m+n)}_{{\lambda}_1+n-1},x^{(m+n)}_{{\lambda}_2+n-2},\ldots ,x^{(m+n)}_{{\lambda}_{n-1}+1},x^{(m+n)}_{{\lambda}_n}\right) ,
\end{equation}
\begin{equation} \label{L-quadrature:d}
 \emph{W}^{(m,n)}_{{\lambda}} := \prod_{1\leq j<k\leq n} \Bigl(x^{(m+n)}_{\lambda_j+n-j}-x^{(m+n)}_{\lambda_k+n-k} \Bigr)^2  \prod_{1\leq j\leq n}  \emph{w}^{(m+n)}_{{\lambda}_j+n-j} ,
\end{equation}
and the summation is over   $\binom{m+n}{n}$ nodes $\mathbf{x}^{(m,n)}_{{\lambda}}$  labeled by
\begin{equation}\label{L-quadrature:e}
\Lambda^{(m,n)}:=\{ \lambda =(\lambda_1,\ldots,\lambda_n) \in\mathbb{Z}^n \mid m \geq \lambda_1\geq \lambda_2\geq\cdots \geq \lambda_n\geq 0 \} .
\end{equation}
\end{subequations}
\end{proposition}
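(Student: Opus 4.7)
The plan is to exploit the determinantal structure of the integrand via the Andr\'eief and Cauchy--Binet identities advertised in the introduction. By linearity in $f$ it suffices to verify \eqref{L-quadrature:a} on any basis of $\mathbb{P}^{(\texttt{D},n)}$, and I propose to use
\[
 f_\mu(\mathbf{x}) := \frac{\det\bigl[x_j^{\mu_i}\bigr]_{i,j=1}^n}{\Delta(\mathbf{x})}, \qquad \Delta(\mathbf{x}) := \det\bigl[x_j^{i-1}\bigr]_{i,j=1}^n ,
\]
indexed by strict partitions $\texttt{D}+n-1 \geq \mu_1 > \mu_2 > \cdots > \mu_n \geq 0$; there are exactly $\binom{\texttt{D}+n}{n} = \dim\mathbb{P}^{(\texttt{D},n)}$ such~$\mu$, so these form a basis.

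For $f=f_\mu$ the integrand on the left-hand side of \eqref{L-quadrature:a} factors as $\det[x_j^{\mu_i}]\cdot\det[x_j^{i-1}]\cdot\prod_j\text{w}(x_j)$, and Andr\'eief's identity collapses the $n$-fold integral to
\[
 \frac{1}{n!}\int_a^b\!\!\cdots\!\!\int_a^b f_\mu(\mathbf{x})\,\text{W}^{(n)}(\mathbf{x})\,dx_1\cdots dx_n \;=\; \det\!\left[\int_a^b x^{\mu_i+j-1}\text{w}(x)\,dx\right]_{i,j=1}^n.
\]
Each entry is the integral of a polynomial of degree at most $(\texttt{D}+n-1)+(n-1) = 2(m+n-1)+1-\delta$, so the assumed quadrature rule with $m+n$ nodes evaluates it exactly. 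After this substitution the matrix becomes the product $P\,D\,V$ with $P_{i,l}=(x_l^{(m+n)})^{\mu_i}$, $D$ diagonal with entries $\text{w}_l^{(m+n)}$, and $V_{l,j}=(x_l^{(m+n)})^{j-1}$.

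Applying the Cauchy--Binet formula to this factorization expands $\det(PDV)$ as a sum over $n$-element subsets $S=\{l_1<\cdots<l_n\}\subseteq\{0,1,\ldots,m+n-1\}$. The assignment $l_j = \lambda_{n-j+1}+j-1$ puts these subsets in bijection with $\lambda\in\Lambda^{(m,n)}$, and under it the nodes $\{x_{l_1}^{(m+n)},\ldots,x_{l_n}^{(m+n)}\}$ are precisely the coordinates of $\mathbf{x}^{(m,n)}_\lambda$. A column-reversal then turns the first Cauchy--Binet minor into $f_\mu(\mathbf{x}^{(m,n)}_\lambda)\,\Delta(\mathbf{x}^{(m,n)}_\lambda)$, the corresponding row-reversal turns the second into $\Delta(\mathbf{x}^{(m,n)}_\lambda)$ (the two reversal signs cancelling in the product), and the weight factor $\prod_{l\in S}\text{w}_l^{(m+n)}$ reorganizes as $\prod_j \text{w}_{\lambda_j+n-j}^{(m+n)}$. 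The $\lambda$-term is therefore exactly $f_\mu(\mathbf{x}^{(m,n)}_\lambda)\,\text{W}^{(m,n)}_\lambda$, reproducing the summand of \eqref{L-quadrature:a}.

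The creative step is the choice of determinantal factorization $\text{W}^{(n)}(\mathbf{x}) = \Delta(\mathbf{x})\cdot\Delta(\mathbf{x})\cdot\prod_j\text{w}(x_j)$ so that Andr\'eief applies and simultaneously the one-dimensional quadrature with $m+n$ nodes has exactly the right degree of exactness to absorb the extra powers of $x$ produced by the Vandermonde factors; I expect this degree match-up to be the crux of the proof. Everything else is routine determinantal bookkeeping, and the two Cauchy--Binet reversal signs are guaranteed to cancel because the same permutation acts on both minors.
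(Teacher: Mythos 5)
Your proposal is correct and follows essentially the same route as the paper: the basis $f_\mu$ indexed by strict sequences is just the Schur basis $s_\lambda$ reindexed via $\mu_i=\lambda_i+n-i$, and the chain Andr\'eief $\to$ one-dimensional quadrature on $m+n$ nodes $\to$ Cauchy--Binet, with the degree count $\texttt{D}+2n-2=2(m+n)-1-\delta$ as the crux, is exactly the paper's argument. The bookkeeping with the subset--partition bijection and the cancelling reversal signs is also as in the paper's steps $(iv)$--$(v)$.
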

\begin{proof}
By linearity, it is sufficient to verify the cubature rule on the Schur basis $s_\mu (\mathbf{x})$, $\mu\in \Lambda^{(\texttt{D},n)}$ of $\mathbb{P}^{(\texttt{D},n)}$, where (cf. e.g. \cite{mac:symmetric})
\begin{equation*}
s_\mu (\mathbf{x}):=  \frac{ \det \left[ x_k^{\mu_j+n-j} \right]_{1\leq j,k\leq n} }{ \det \left[ x_k^{n-j} \right]_{1\leq j,k\leq n} } .
\end{equation*}
Elementary manipulations readily confirm  the validity of our cubature rule in this situation:
\begin{align*}
&\frac{1}{n!} \int_a^{b}\cdots\int_a^b 
s_\mu (\mathbf{x})  \text{W}^{(n)}(\mathbf{x}) \text{d} x_1\cdots\text{d} x_n \\
&\stackrel{(i)}{=} \frac{1}{n!} \int_a^{b}\cdots\int_a^b 
 \det \left[ x_k^{\mu_j+n-j} \right]_{1\leq j,k\leq n}  \det \left[ x_k^{n-j} \right]_{1\leq j,k\leq n}
\prod_{1\leq j\leq n} \text{w}(x_j)\,
 \text{d} x_1\cdots\text{d} x_n  \\
 &\stackrel{(ii)}{=} \det  \left[    \int_a^b   x^{\mu_j+2n-j-k}  \text{w}(x) \text{d}x  \right]_{1\leq j,k\leq n} \\
 &\stackrel{(iii)}{=}  \det  \left[    \sum_{0\leq l< m+n}     \bigl(x^{(m+n)}_l\bigr)^{\mu_j+2n-j-k}   \text{w}^{(m+n)}_{l} \right]_{1\leq j,k\leq n}  \\
 &\stackrel{(iv)}{=}\sum_{m+n>l_1>l_2>\cdots >l_n\geq 0}  \Biggl( 
 \det  \left[ \bigl(x^{(m+n)}_{l_k}   \bigr)^{\mu_j+n-j} \right]_{1\leq j,k\leq n} \\  
& \hspace{12em} \times \det \left[ \bigl(x^{(m+n)}_{l_k}  \bigr)^{n-j} \right]_{1\leq j,k\leq n}
\prod_{1\leq j\leq n} \text{w}^{(m+n)}_{l_j} \Biggr)  \\
&\stackrel{(v)}{=}\sum_{{\lambda}\in \Lambda^{(m,n)}}
s_\lambda \bigl(\mathbf{x}^{(m,n)}_{{\lambda}}\bigr)   \text{W}^{(m,n)}_{{\lambda}} .
\end{align*}
In the successive steps above we used: $(i)$ the Vandermonde determinant 
\begin{equation*}  \det \left[ x_k^{n-j} \right]_{1\leq j,k\leq n}= \prod_{1\leq j<k\leq n} (x_j-x_k) ,\end{equation*}
$(ii)$ Andr\'eief's integral counterpart of the Cauchy-Binet formula (cf. Eq. \eqref{andreief} in Appendix \ref{app} with $f_j(x)=x^{\mu_j+n-j}$ and $g_j(x)=x^{n-j}$), $(iii)$ the quadrature rule from Eqs. \eqref{quadrature}, \eqref{nodes}
on $m+n$ nodes, $(iv)$ the Cauchy-Binet formula (cf. Eq. \eqref{cauchy-binet} in Appendix \ref{app}), and $(v)$ the Vandermonde determinant.
\end{proof}

The quadrature rule in Eqs. \eqref{quadrature}, \eqref{nodes}  is  {\em interpolatory}  if it is exact for (Lagrange) interpolation polynomials on the nodes. This is the case when  $m+1\geq \delta$ (so $\texttt{D}\geq m$). Moreover, the rule
 is {\em positive} 
if both $\text{w}(x)>0$ for $a<x<b$ and $\text{w}^{(m+1)}_l>0$ for $0\leq l\leq m$. For positive interpolatory quadratures of degree $ \texttt{D}\geq 2m$ (so $\delta=0$ or  $\delta= 1$) the formula in Proposition \ref{L-quadrature:prp} goes back to \cite{ber-sch-xu:multivariate}, where it was deduced with the aid of multivariate orthogonal polynomials associated with
the weight function $W^{(n)}(\mathbf{x})$  \eqref{L-quadrature:b}.  In this situation the underlying
quadrature is Gaussian if $\texttt{D}=2m+1$ (i.e. $\delta=0$), while for $\texttt{D}=2m$ (i.e. $\delta=1$)
one is dealing with a quasi-Gaussian quadrature if all nodes belong to the open interval $]a,b[$ (cf. e.g. \cite{mic-riv:numerical,xu:characterization}) and with a Gauss-Radau quadrature if {\em one} of the nodes 
attains the boundary of this interval (cf. e.g. \cite{gau:survey}). Let us recall in this connection  that when  both boundary points are attained by the nodes, then the optimal degree of exactness is achieved by the corresponding Gauss-Lobatto quadrature at $\texttt{D}=2m-1$ (i.e. $\delta=2$), cf. e.g. \cite{gau:survey}.

\section{Cubature rule for unitary Jacobi ensembles}\label{sec3}
In \cite[Theorem 5]{die-ems:quadrature} we presented a positive quadrature rule of the form
\begin{subequations}
\begin{equation}\label{q-rule:a}
\frac{1}{2\pi} \int_0^\pi  R (\xi)  \rho_\epsilon  (\xi) \text{d}\xi =
 \sum_{0\leq {l}\leq m}   R\bigl( \xi _{{l}}^{(m+1)}\bigr)  \rho_\epsilon \bigl(\xi _{l}^{(m+1)}\bigr) {\Delta}^{(m+1)}_{{l}} ,
\end{equation}
for the integration of rational functions $R(\cdot )$ in $\cos (\xi)$ against the Chebyshev weight functions
\begin{equation}\label{q-rule:b}
\rho_\epsilon (\xi):=2^{\epsilon_++\epsilon_-}(1+\epsilon_+\cos(\xi))   (1-\epsilon_-\cos(\xi)) \qquad  (\epsilon_\pm \in \{ 0,1\} ).
\end{equation}
In this formula the positions of the nodes
\begin{equation}\label{q-rule:c}
0\leq \xi_0^{(m+1)}< \xi_1^{(m+1)}<\cdots <\xi_m^{(m+1)}\leq \pi
\end{equation}
 are controlled by two families of parameters $a_1,\ldots ,a_d$ and $\tilde{a}_1,\ldots,\tilde{a}_{\tilde{d}}$ with $|a_r|<1$ and $|\tilde{a}_r|<1$. 
 Here and below these parameters are allowed to be complex, with the assumption that both the sets
 $\{ a_1,\ldots ,a_d\}$ and $\{\tilde{a}_1,\ldots,\tilde{a}_{\tilde{d}}\}$ are invariant under complex conjugation. Specifically, the node $\xi_{l}^{(m+1)}$  ($l\in \{ 0,\ldots ,m\}$) is retrieved as the unique real solution of the transcendental equation
\begin{equation}\label{bethe:eq}
2\bigl(m-d_\epsilon-\tilde{d}_{\tilde{\epsilon}}\bigr)\xi + \sum_{1\leq r\leq d}  \int_0^\xi u_{a_r}(\theta)\text{d}\theta +    
 \sum_{1\leq r\leq \tilde{d}}  \int_0^\xi u_{\tilde{a}_r}(\theta)\text{d}\theta  =\pi (2l+\epsilon_- +\tilde{\epsilon}_-) ,
\end{equation}
where
\begin{align}\label{ua}
u_a(\theta) :=&   \frac{1-a^2}{1-2a\cos (\theta)+a^2}  \qquad   (  |a |<1,\, \theta\in\mathbb{R})  , 
\end{align}
and the corresponding Christoffel weight is given by
\begin{align}\label{cf-weights}
 {\Delta}^{(m+1)}_{{l}}& :={\textstyle \left( \frac{1}{2}\right) ^{(1-\epsilon_-)(1-\tilde{\epsilon}_-)\delta_l+(1-\epsilon_+)(1-\tilde{\epsilon}_+)\delta_{m-l}}}  \times\\
&  \Biggl(2\bigl(m-d_\epsilon-\tilde{d}_{\tilde{\epsilon}}\bigr) + 
\sum_{1\leq r\leq d} u_{a_r}\bigl(\xi^{(m+1)}_{{l}}\bigr) +    
 \sum_{1\leq r\leq \tilde{d}}  u_{\tilde{a}_r}\bigl(\xi^{(m+1)}_{{l}} \bigr) \Biggr)^{-1} . \nonumber
\end{align}
Here $d_\epsilon:= \frac{1}{2} (d-\epsilon_+-\epsilon_-)$, $\tilde{d}_{\tilde{\epsilon}}:= \frac{1}{2} (\tilde{d}-\tilde{\epsilon}_+-\tilde{\epsilon}_-)$,  and
\begin{equation*}
\delta_l:= \begin{cases}
1 &\text{if}\ l=0,\\
0&\text{otherwise}.
\end{cases} 
\end{equation*}
It is furthermore assumed that
$m$ is positive such that
\begin{equation}\label{m-condition}
{m> \lceil d_\epsilon \rceil + \lceil \tilde{d}_{\tilde{\epsilon}}\rceil } .
\end{equation}
\end{subequations}
The quadrature in Eqs. \eqref{q-rule:a}--\eqref{m-condition} is exact for rational functions $R(\cdot)$  with prescribed poles of the form
\begin{subequations}
\begin{equation}\label{q-rule:r}
R(\xi )  =   \frac{f\bigl(\cos (\xi)\bigr)}{\prod_{1\leq r\leq d} \bigl(1-2 a_r\cos (\xi )+ a_r^2\bigr)}   ,
\end{equation}
where $f\bigl(\cos(\xi)\bigr)$ stands for an arbitrary polynomial   in $\cos (\xi)$ of degree at most
\begin{equation}\label{q-rule:d}
\texttt{D}= 2m+\tilde{\epsilon}_++\tilde{\epsilon}_- -\tilde{d}-1.
\end{equation}
\end{subequations}

With the aid of Proposition \ref{L-quadrature:prp}, we will now lift the quadrature in question to a corresponding cubature rule for the integration of
symmetric functions---with prescribed poles at coordinate hyperplanes---against the densities of the unitary Jacobi ensembles.

\begin{theorem}[Cubature Rule for Unitary Jacobi Ensembles]\label{jacobi-cubature:thm}
Let $\epsilon_\pm,\tilde{\epsilon}_\pm \in \{ 0, 1\}$,  $|a_r|<1$ ($r=1,\ldots ,d$),  $|\tilde{a}_r|<1$ ($r=1,\ldots ,\tilde{d}$) with in each case (possible) complex parameters arising in complex conjugate pairs. Then assuming
\begin{subequations}
\begin{equation}\label{jacobi-cubature:a}
m+n -1>  \lceil d_\epsilon \rceil + \lceil \tilde{d}_{\tilde{\epsilon}}\rceil ,
\end{equation}
one has that 
\begin{align}\label{jacobi-cubature:b}
\frac{1}{(2\pi )^n\, n!}&
\int_0^\pi\cdots \int_0^\pi  R ( \boldsymbol{\xi})  \rho_\epsilon  (\boldsymbol{\xi}) \text{d}\xi_1\cdots \text{d}\xi_n =\\
& \sum_{{\lambda}\in\Lambda^{(m,n)}}   R\bigl( \boldsymbol{ \xi} _{{\lambda}}^{(m,n)} \bigr)  \rho_\epsilon \bigl(\boldsymbol{ \xi} _{{\lambda}}^{(m,n)} \bigr)  \Delta_\lambda^{(m,n)} 
.\nonumber
\end{align}
Here $\rho_\epsilon  (\boldsymbol{\xi})$ is given by the unitary Jacobi distribution in  Eq. \eqref{rho-jacobi}, the nodes $\boldsymbol{\xi}^{(m,n)}_{{\lambda}}$ are of the form in Eq. \eqref{L-quadrature:c} (with $x$ replaced by $\xi$), the Christoffel weights read
\begin{equation}
\Delta_\lambda^{(m,n)} := \prod_{1\leq j\leq n}   \Delta_{\lambda_j+n-j}^{(m+n)} ,
\end{equation}
and $R(\cdot)$ is of the form
\begin{equation}
R(\boldsymbol{\xi})  =   \frac{f\bigl(\cos (\xi_1),\ldots ,\cos(\xi_n)\bigr)}{\prod_{\substack{1\leq r\leq d\\ 1\leq j\leq n}} \bigl(1-2a_r\cos (\xi_j )+a_r^2\bigr)}    ,
\end{equation}
 where $f(x_1,\ldots,x_n)=f(\mathbf{x})$ denotes an arbitrary symmetric polynomial in $\mathbb{P}^{(\emph{\texttt{D}},n)}$ with $\emph{\texttt{D}}$ taken from Eq. \eqref{q-rule:d}.
 \end{subequations}
\end{theorem}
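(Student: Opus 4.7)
The plan is to reduce the theorem to Proposition~\ref{L-quadrature:prp} by recasting the one-dimensional rule of Eqs.~\eqref{q-rule:a}--\eqref{m-condition} as an ordinary polynomial quadrature in $x=\cos(\xi)$, feeding it into the lifting machinery, and then pulling the resulting cubature back to $\boldsymbol{\xi}$-variables. First I would substitute $x=\cos(\xi)$ on $[0,\pi]$, absorb the rational factor $\prod_{1\leq r\leq d}(1-2a_r\cos(\xi)+a_r^2)$ into both the weight and the Christoffel weights, and arrive at
\begin{equation*}
\int_{-1}^{1} f(x)\,\tilde{\text{w}}(x)\,\text{d}x=\sum_{0\leq l\leq m} f(x^{(m+1)}_l)\,\tilde{\text{w}}^{(m+1)}_l ,
\end{equation*}
exact for every $f\in\mathbb{P}^{(\texttt{D},1)}$ with $\texttt{D}$ as in~\eqref{q-rule:d}; here the nodes $x^{(m+1)}_l:=\cos(\xi^{(m+1)}_{m-l})$ are strictly increasing and
\begin{equation*}
\tilde{\text{w}}(x):=\frac{\rho_\epsilon(\arccos x)}{2\pi\sqrt{1-x^2}\prod_{1\leq r\leq d}(1-2a_rx+a_r^2)}, \quad \tilde{\text{w}}^{(m+1)}_l:=\frac{\rho_\epsilon(\xi^{(m+1)}_l)\,\Delta^{(m+1)}_l}{\prod_{1\leq r\leq d}(1-2a_rx^{(m+1)}_l+a_r^2)} .
\end{equation*}

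Next I would apply Proposition~\ref{L-quadrature:prp} to this rewritten rule. Its proof silently invokes the $(m+n)$-node version of the one-dimensional quadrature, which is available precisely when $m+n-1>\lceil d_\epsilon\rceil+\lceil\tilde{d}_{\tilde{\epsilon}}\rceil$---the hypothesis~\eqref{jacobi-cubature:a}---and yields, for every symmetric $f\in\mathbb{P}^{(\texttt{D},n)}$,
\begin{equation*}
\frac{1}{n!}\int_{-1}^{1}\!\cdots\!\int_{-1}^{1}\! f(\mathbf{x})\prod_{1\leq j<k\leq n}(x_j-x_k)^2\prod_{1\leq j\leq n}\tilde{\text{w}}(x_j)\,\text{d}x_1\cdots\text{d}x_n=\sum_{\lambda\in\Lambda^{(m,n)}}f(\mathbf{x}^{(m,n)}_\lambda)\,\tilde{W}^{(m,n)}_\lambda ,
\end{equation*}
with $\tilde{W}^{(m,n)}_\lambda$ assembled from the $\tilde{\text{w}}^{(m+n)}_{\lambda_j+n-j}$ exactly as in Eq.~\eqref{L-quadrature:d}. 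Although the underlying 1D rule is used at $m+n$ nodes, the cubature remains exact only up to the original degree $\texttt{D}$, which is precisely the $\texttt{D}$ of the theorem.

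Finally I would undo the change of variables. On the left the Jacobians $\prod_j\sin(\xi_j)$ cancel the $\sqrt{1-x_j^2}$ factors hidden in $\prod_j\tilde{\text{w}}(x_j)$, the squared Vandermonde $\prod_{j<k}(x_j-x_k)^2$ turns into $\prod_{j<k}(\cos(\xi_j)-\cos(\xi_k))^2$, and the remaining single-variable numerators reassemble $\rho_\epsilon(\boldsymbol{\xi})$, while the residual denominators combine with $f(\cos\boldsymbol{\xi})$ to produce $R(\boldsymbol{\xi})$---giving the left-hand side of Eq.~\eqref{jacobi-cubature:b}. The identical factorization on the right converts $f(\mathbf{x}^{(m,n)}_\lambda)\tilde{W}^{(m,n)}_\lambda$ into $R(\boldsymbol{\xi}^{(m,n)}_\lambda)\rho_\epsilon(\boldsymbol{\xi}^{(m,n)}_\lambda)\Delta^{(m,n)}_\lambda$. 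The main obstacle is purely organizational: one must track the order-reversal induced by $\cos$ on the one-dimensional nodes, which corresponds to the involution $\lambda\mapsto(m-\lambda_n,\ldots,m-\lambda_1)$ on $\Lambda^{(m,n)}$, and verify that the degree $\texttt{D}$ handed to Proposition~\ref{L-quadrature:prp} coincides with the theorem's $\texttt{D}$ despite the hidden parameter shift $m\mapsto m+n-1$ inside that proposition's proof. The combinatorial heart of the argument---the Cauchy--Binet/Andr\'eief computation---has already been carried out once and for all in Proposition~\ref{L-quadrature:prp}.
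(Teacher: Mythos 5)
Your proposal is correct and follows essentially the same route as the paper: recast the one-dimensional Bernstein--Szeg\"o quadrature as a polynomial quadrature in a cosine variable (absorbing the rational denominator into the weight), apply Proposition~\ref{L-quadrature:prp} at $m+n$ nodes under hypothesis~\eqref{jacobi-cubature:a}, and transform back. The only (cosmetic) difference is that the paper uses $x=-\cos(\xi)$, which keeps the node ordering and spares you the order-reversing relabeling $l\mapsto m-l$ that you must track (and which, incidentally, is applied inconsistently in your displayed formula for $\tilde{\text{w}}^{(m+1)}_l$, where the indices of $\rho_\epsilon$ and $\Delta^{(m+1)}$ should also be reversed).
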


\begin{proof}
After a change of variable of the form $x=-\cos(\xi)$ the quadrature rule in Eqs. \eqref{q-rule:a}--\eqref{m-condition}---for $R(\cdot)$ from Eq. \eqref{q-rule:r} (subject to the constraint
\eqref{q-rule:d} on the maximal degree $\texttt{D}$ of the polynomial in the numerator)---becomes of the standard form in Eqs. \eqref{quadrature}, \eqref{nodes} with
$(a,b)=(-1,1)$, $x_l^{(m+1)}=-\cos \bigl( \xi^{(m+1)}_l\bigr)$, and
\begin{align*}
\text{w}(x) &=  \frac{(1-\epsilon_+ x)(1+\epsilon_- x)}{2\pi \sqrt{1-x^2}  \prod_{1\leq r\leq d} (1+2a_r x+a_r^2)} , \\
\text{w}^{(m+1)}_l& =  \Delta_l^{(m+1)} 
 \frac{ (1-\epsilon_+ x^{(m+1)}_l)(1+\epsilon_- x^{(m+1)}_l) }{ \prod_{1\leq r\leq d} (1+2a_r x^{(m+1)}_l+a_r^2)}.
\end{align*}
Upon applying Proposition \ref{L-quadrature:prp} and transforming back to trigonometric variables $x_j=-\cos (\xi_j)$ ($j=1,\ldots ,n$), the asserted cubature rule follows.
\end{proof}

\begin{remark}\label{estimates:rem}
It is immediate  from the transcendental equation in Eqs. \eqref{bethe:eq}, \eqref{ua} (via the mean value theorem) that the solutions $\xi^{(m+n)}_0,\ldots , \xi^{(m+n)}_{m+n-1}$ building the cubature nodes $\boldsymbol{\xi}^{(m,n)}_\lambda$, $\lambda\in\Lambda^{(m,n)}$ satisfy the following inequalities
(cf. \cite[Section 4.2]{die-ems:quadrature}):
\begin{equation*}\label{bound:a}
\frac{\pi \bigl({l}+\frac{1}{2}(\epsilon_- +\tilde{\epsilon}_-) \bigr)}{m +n-1-{d_\epsilon}- \tilde{d}_{\tilde{\epsilon}}+\kappa_-}   \leq \xi_{{l}}^{(m+n)}  \leq 
\frac{\pi \bigl({l}+\frac{1}{2}(\epsilon_- +\tilde{\epsilon}_-) \bigr)}{m +n-1 -{d_\epsilon}-\tilde{d}_{\tilde{\epsilon}}+\kappa_+} 
\end{equation*}
(for $0\leq {l}< m+n$), and
\begin{equation*}\label{bound:b}
\frac{\pi ({k}-{l})}{m +n-1-{d_\epsilon}- \tilde{d}_{\tilde{\epsilon}}+\kappa_-}   \leq \xi_{{k}}^{(m+n)} - \xi_{{l}}^{(m+n)} \leq 
\frac{\pi ({k}-{l})}{m+n-1 -{d_\epsilon}- \tilde{d}_{\tilde{\epsilon}}+\kappa_+} 
\end{equation*}
(for $0\leq {l}<{k}< m+n$), where
\begin{equation*}\label{kappa}
\kappa_\pm := \frac{1}{2}\sum_{1\leq r\leq d}   \left(\frac{1- | a_r|}{1 + | a_r|}\right)^{\pm 1}   + 
  \frac{1}{2}\sum_{1\leq r\leq \tilde{d}}   \left(\frac{1- |\tilde{a}_{r|}}{1 + |\tilde{a}_{r|}}\right)^{\pm 1}  .
\end{equation*}
Moreover,  it is clear from  Eqs. \eqref{bethe:eq}, \eqref{ua} that the boundary value $\xi^{(m+n)}_0=0$
is attained iff $\epsilon_-=\tilde{\epsilon}_-=0$ and the boundary value
$ \xi^{(m+n)}_{m+n-1}=\pi$
is attained  iff $\epsilon_+=\tilde{\epsilon}_+=0$ (since $\int_0^\pi u_a (\theta) \text{d}\theta =\pi$ for $|a|<1$).
\end{remark}

\begin{remark}\label{gaussian:rem}
In the above integration formulas the degree of exactness  is optimal if $\texttt{D}$ \eqref{q-rule:d} reaches the Gaussian value $2m+1$, which is achieved when $\tilde{d}=0$ and $\tilde{\epsilon}_\pm =1$.  This
special case of the quadrature rule in Eqs. \eqref{q-rule:a}--\eqref{m-condition}  can be inferred from
\cite{dar-gon-jim:quadrature} for $\epsilon_\pm =0$, and  from
 \cite{bul-cru-dec-gon:rational,dec-van-bul:rational,van-bul-gon:computing} for general $\epsilon_\pm \in \{ 0,1\}$ (cf. also
 \cite[Section 8]{die-ems:exact}). The corresponding
specialization of the cubature rule in Theorem \ref{jacobi-cubature:thm} was presented in turn in \cite[Section 9]{die-ems:exact}.
\end{remark}

\begin{remark}
When $d=\tilde{d}=0$, Theorem \ref{jacobi-cubature:thm} reduces to an elementary cubature rule of the form
\begin{subequations}
\begin{align}\label{GCc:a}
&\frac{1}{(2\pi )^n\, n!}\int_0^\pi\cdots \int_0^\pi  f \bigl( \cos(\boldsymbol{\xi})\bigr)  \rho_\epsilon  (\boldsymbol{\xi}) \text{d}\xi_1\cdots \text{d}\xi_n =
 \\
& \frac{1}{N_\epsilon^{(m,n)}}
\sum_{{\lambda}\in\Lambda^{(m,n)}}  
{\textstyle \left( \frac{1}{2}\right) ^{(1-\epsilon_+)(1-\tilde{\epsilon}_+)\delta_{m-\lambda_1}+(1-\epsilon_-)(1-\tilde{\epsilon}_-)\delta_{\lambda_n}} }
 f \left( \cos \bigl( \boldsymbol{ \xi} _{{\lambda}}^{(m,n)} \bigr) \right) \rho_\epsilon \bigl(\boldsymbol{ \xi} _{{\lambda}}^{(m,n)} \bigr) \nonumber
\end{align}
for $ f \bigl( \cos(\boldsymbol{\xi})\bigr) := f\bigl(\cos(\xi_1),\ldots,\cos(\xi_n)\bigr)$, where $f(x_1,\ldots,x_n)=f(\mathbf{x})\in \mathbb{P}^{(\texttt{D},n)}$ with
$\texttt{D}=2m+\tilde{\epsilon}_+ +\tilde{\epsilon}_- -1$ and
\begin{equation}
N^{(m,n)}_\epsilon:= \bigl( 2(m+n-1)+\epsilon_+ +\epsilon_- + \tilde{\epsilon}_+ +\tilde{\epsilon}_-\bigr)^n .
\end{equation}
The corresponding cubature nodes
$ \boldsymbol{ \xi} _{{\lambda}}^{(m,n)}$, $\lambda\in\Lambda^{(m,n)}$  become in this situation explicit in closed form:
\begin{equation}\label{GCc:b}
\xi^{(m+n)}_{{l}}=
\frac
{\pi \bigl({l}+\frac{1}{2}(\epsilon_- +\tilde{\epsilon}_-) \bigr)}
{m+n-1+ \frac{1}{2}(\epsilon_+ +\epsilon_- + \tilde{\epsilon}_+ +\tilde{\epsilon}_-)} \qquad (0\le {l}<m+n).
\end{equation}
\end{subequations}
The case $\epsilon_-=0$ recovers (up to a change of variables) eight cubature formulas stemming from the discrete cosine transforms DCT-1,$\ldots$,DCT-8 detailed in
\cite[Section 5]{hri-mot:discrete} (cf. loc. cit. Eqs. (5.9), (5.10) and Subsections 5.3.2, 5.3.4). The case $\epsilon_-=1$ recovers in turn analogous cubature formulas stemming from the discrete sine transforms DST-1,$\ldots$,DST-8 \cite{bru-hri-mot:discrete} (cf. also Remark \ref{dxt:rem} below). When $\tilde{\epsilon}_\pm=1$ the cubature rule \eqref{GCc:a}--\eqref{GCc:b} is exact of optimal degree $\texttt{D}=2m+1$; this case
was highlighted in  \cite[Eqs. (9.2a), (9.2b)]{die-ems:exact} as a special elementary instance of the Gaussian cubature rule alluded to in Remark \ref{gaussian:rem} above.
\end{remark}

\begin{remark}\label{dxt:rem}
The quadrature in Eqs. \eqref{q-rule:a}--\eqref{m-condition} originates from a finite system of discrete orthogonality relations for the Bernstein-Szeg\"o polynomials  \cite[Section 2.3]{die-ems:quadrature}.
When all parameters
$a_1,\ldots,a_d$ and $\tilde{a}_1,\ldots ,\tilde{a}_{\tilde{d}}$ vanish, the orthogonality relations in question
simplify and express the orthogonality of a matrix representing the kernel of a discrete trigonometric transform:
\begin{equation*}\label{DXT}
\Psi^{(m+1)}:=\left[\psi^{(m+1)}_{l,k} \right]_{0\leq l,k\leq m} ,
\end{equation*}
where
\begin{align*}
 \psi^{(m+1)}_{l,k}&:=\sqrt{\frac{2}{m+ \frac{1}{2}(\epsilon_+ +\epsilon_- + \tilde{\epsilon}_+ +\tilde{\epsilon}_-)} }
\, \text{cs}_\epsilon \left(   \frac{\pi   \bigl(l+ \frac{1}{2}(\epsilon_-  +\tilde{\epsilon}_-)\bigr)  \bigl( k +  \frac{1}{2}(\epsilon_- +\epsilon_+ )\bigr) }{m+ \frac{1}{2}(\epsilon_+ +\epsilon_- + \tilde{\epsilon}_+ +\tilde{\epsilon}_-)}     \right)
\\
&\times \left( \frac{1}{\sqrt{ 2}}\right)^{   (1-\epsilon_-)(1-\tilde{\epsilon}_-)\delta_l +(1-\epsilon_+)(1-\tilde{\epsilon}_+) \delta_{m-l}  +  (1-\epsilon_-)(1-\epsilon_+) \delta_k+ (1-\tilde{\epsilon}_-)(1-\tilde{\epsilon}_+)\delta_{m-k}         },
\end{align*}
with 
\begin{equation*}
\text{cs}_\epsilon (\xi):= \begin{cases}
\cos(\xi) &\text{if}\ \epsilon_-=0 ,\\
\sin(\xi) &\text{if}\ \epsilon_-=1.
\end{cases} 
\end{equation*}
It is well-known that the kernels of such discrete trigonometric transforms arise from diagonalizing  Jacobi matrices of the form
\begin{equation*}\label{J}
\text{J}^{(m+1)}=
\begin{bmatrix}
b  & \sqrt{a} & & & &\\
\sqrt{a} &  0 & 1& & & \\
 & 1 &\ddots &\ddots & &\\
 &  &\ddots &\ddots &1 &\\
& &   &  1       &0&\sqrt{\tilde{a}} \\
 & & & &  \sqrt{\tilde{a}}  &\tilde{b} \end{bmatrix} ,
\end{equation*}
with $a:=2^{(1-\epsilon_-)(1-\epsilon_+)} $, $b:=\epsilon_+- \epsilon_- $ and $\tilde{a}:=2^{(1-\tilde \epsilon_-)(1-\tilde\epsilon_+)}$, $\tilde{b}:= \tilde{\epsilon}_+ -  \tilde{\epsilon}_-$. Indeed, one has that
(cf. e.g. \cite{bri-yip-rao:discrete,die-ems:discrete,die-ems:quadrature,str:discrete})
\begin{equation*}
\Psi^{(m+1)} \text{J}^{(m+1)} =\text{E}^{(m+1)}  \Psi^{(m+1)}  ,
\end{equation*}
where 
\begin{equation*}
 \text{E}^{(m+1)} := \text{diag} \left[  2\cos\bigl( \xi^{(m+1)}_0\bigr),\ldots ,2\cos\bigl(\xi^{(m+1)}_m\bigr) \right] ,
\end{equation*}
with
$\xi^{(m+1)}_0,\ldots , \xi^{(m+1)}_m$ taken from Eq. \eqref{GCc:b} (so $n=1$). The  following table encodes how to indentify in the above formulae the kernels (and the Jacobi matrices)
corresponding to the sixteen standard discrete trigonometric transforms; this yields
the discrete cosine transforms DCT-1,$\ldots$,DCT-8 and the discrete sine transforms DST-1,$\ldots$,DST-8, respectively, upon specializing  the boundary parameters $\epsilon_\pm,\tilde{\epsilon}_\pm \in \{ 0,1\}$. 
\begin{equation*} \label{DXT:table}
\begin{tabular}{@{}|c|cccc|c|@{}} 
\toprule
$(\tilde \epsilon_-,\tilde \epsilon_+) \backslash (\epsilon_-, \epsilon_+)$ & (0,0) & (0,1)& (1,0) & (1,1)\\
\midrule
(0,0) & DCT-1 & DCT-6 &  DST-8  &  DST-3 \\ 
(0,1) & DCT-5 & DCT-2 &  DST-4  & DST-7 \\ 
(1,0) & DCT-7 & DCT-4 &  DST-2  & DST-5 \\ 
(1,1) & DCT-3 & DCT-8 & DST-6   &  DST-1 \\ 
\bottomrule
\end{tabular} 
\end{equation*}
The underlying orthogonal families, which arise as parameter specializations of the Bernstein-Szeg\"o polynomials, are given by the Chebyshev polynomials of the first kind ($\epsilon_\pm =0$), of the second kind ($\epsilon_\pm =1$), of the third kind ($\epsilon_+=0$, $\epsilon_-=1$),
and of the fourth kind  ($\epsilon_+=1$, $\epsilon_-=0$), respectively (cf. \cite[Remark 6.1]{die-ems:exact}).
\end{remark}

\appendix

\section{The Cauchy-Binet-Andr\'eief formulas}\label{app}
Let $f_1,\ldots f_n$ and $g_1,\ldots ,g_n$ be functions in the Hilbert space $L^2\bigl( ]a,b[ , | \text{w}(x) | \text{d}x\bigr)$. Then  Andr\'eief's integration formula states
that  \cite{and:note}:
\begin{align}\label{andreief}
\frac{1}{n!} \int_a^{b}\cdots\int_a^b &
 \det \left[ f_j(x_k)  \right]_{1\leq j,k\leq n}  \det \left[ g_j(x_k)\right]_{1\leq j,k\leq n}
\prod_{1\leq j\leq n} \text{w}(x_j)\,
 \text{d} x_1\cdots\text{d} x_n  \nonumber \\
 &=\det  \left[    \int_a^b   f_j(x)g_k(x)  \text{w}(x) \text{d}x  \right]_{1\leq j,k\leq n} .
\end{align}
 A short and elementary verification of this identity is provided e.g. in \cite[Lemma 3.1]{bai-dei-stra:products}. A similar proof can be found in \cite{for:meet} together with a historical account of the formula. 
If we pass from functions on the interval to functions supported on the nodes $x_0^{(m+n)}<\cdots <x_{m+n-1}^{(m+n)} $, and replace the
integration measure $ \text{w}(x)\text{d}x$ by the discrete point measure on these nodes with weights  $\text{w}_0^{(m+n)},\ldots ,\text{w}_{m+n-1}^{(m+n)}$, then
 Andr\'eief's integration formula gives rise to the identity
 \begin{align*}
 \frac{1}{n!}\sum_{\substack{0\leq l_k <m+n \\ k=1,\ldots ,n}} &  \Biggl( 
 \det  \left[ f_j\bigl(x^{(m+n)}_{l_k}   \bigr)  \right]_{1\leq j,k\leq n}  \det \left[ g_j \bigl(x^{(m+n)}_{l_k}  \bigr) \right]_{1\leq j,k\leq n}
\prod_{1\leq j\leq n} \text{w}^{(m+n)}_{l_j} \Biggr)  \\
 &=\det  \left[    \sum_{0\leq l< m+n}    f_j \bigl(x^{(m+n)}_l\bigr)   g_k \bigl(x^{(m+n)}_l\bigr)   \text{w}^{(m+n)}_{l} \right]_{1\leq j,k\leq n}  .
\end{align*}
Upon exploiting the anti-symmetry of the determinants with respect to the ordering of  the indices $l_1,\ldots ,l_n$,  the LHS  can be rewritten as
\begin{equation*}
\sum_{m+n>l_1>l_2>\cdots >l_n\geq 0}  \Biggl( 
 \det  \left[ f_j\bigl(x^{(m+n)}_{l_k}   \bigr)  \right]_{1\leq j,k\leq n} 
 \det \left[ g_j \bigl(x^{(m+n)}_{l_k}  \bigr) \right]_{1\leq j,k\leq n}
\prod_{1\leq j\leq n} \text{w}^{(m+n)}_{l_j} \Biggr)  .
\end{equation*}
This shows that the  identity of interest amounts to the celebrated Cauchy-Binet formula
\begin{equation}\label{cauchy-binet}
\sum_{\substack{L\subset \{ 0,\ldots,m+n-1\} \\ |L| = n}} \det F_L \det G_L = \det \left( F G^T \right) ,
\end{equation}
with $F=F_{\{ 0,\ldots,m+n-1\}}$,  $G=G_{\{ 0,\ldots,m+n-1\}}$ and
\begin{equation*}
F_L=  \left[ f_j\bigl(x^{(m+n)}_{l}   \bigr)  \right]_{\substack{1\leq j \leq n\\ l\in L} }     ,  \qquad
G_L=  \left[ g_j\bigl(x^{(m+n)}_{l}   \bigr)   \text{w}^{(m+n)}_{l}  \right]_{\substack{1\leq j \leq n\\ l\in L} }  .
\end{equation*}
In Eq. \eqref{cauchy-binet} the sum is over all subsets  of indices $L$ of cardinality $|L|=n$, and 
 $G^T$ refers to the transposed matrix.

\bibliographystyle{amsplain}

\end{document}